\numberwithin{equation}{section}
\newif\ifpdf\ifx\pdfoutput\undefined\pdffalse\else\pdfoutput=1\pdftrue\fi
  \ifpdf\pdfinfo{/Title
                 /Author
                }
  \else\usepackage{graphicx}\fi
\def\myref{{\global\advance\refnum by 1} {\bf \large Lecture \the \refnum. }}
\newcommand{\cB}{{\mathcal{B}}}
\newcommand{\cG}{{\mathcal{G}}}
\newcommand{\cK}{{\mathcal{K}}}
\newcommand{\cL}{{\mathcal{L}}}
\newcommand{\cP}{{\mathcal{P}}}
\newcommand{\cR}{{\mathcal{R}}}
\newcommand{\cY}{{\mathcal{Y}}}
\newcommand{\cZ}{{\mathcal{Z}}}
\newcommand{\RR}{\mathbb{R}} 
\renewcommand{\SS}{{\mathbb{S}}} 
\newcommand{\st}{\mbox{ s.t. }}
\DeclareMathOperator*{\argmin}{arg\,min} 
\newcommand{\bc}{\begin{center}}
\newcommand{\ec}{\end{center}}
\newcommand{\bdm}{\begin{displaymath}}
\newcommand{\edm}{\end{displaymath}}
\newcommand{\beq}{\begin{equation}}
\newcommand{\eeq}{\end{equation}}
\newcommand{\bfl}{\begin{flushleft}}
\newcommand{\efl}{\end{flushleft}}
\newcommand{\bt}{\begin{tabbing}}
\newcommand{\et}{\end{tabbing}}
\newcommand{\beqn}{\begin{eqnarray}}
\newcommand{\eeqn}{\end{eqnarray}}
\newcommand{\beqs}{\begin{align*}} 
\newcommand{\eeqs}{\end{align*}}  
\newtheorem{theorem}{Theorem}[section]
\newtheorem{definition}{Definition}[section]
\newtheorem{lemma}{Lemma}[section]
\newtheorem{remark}{Remark}[section]
\def\R{\mathbb{R}}
\def\S{\mathbb{S}}
\def\Sn{\S^n}
\def\Snn{\S^{n^2+1}}
\def\Snp{\Sn_+}
\def\Snr{\S^{rn}_+}
\newcommand{\ADMM}{\textbf{ADMM}\,}
\newcommand{\ADMMp}{\textbf{ADMM}}
\newcommand{\QAP}{\textbf{QAP}\,}
\newcommand{\QAPp}{\textbf{QAP}}
\newcommand{\SDP}{\textbf{SDP}\,}
\newcommand{\SDPp}{\textbf{SDP}}
\newcommand{\textdef}[1]{\textit{#1}\index{#1}}
\DeclareMathOperator{\trace}{{trace}}
\DeclareMathOperator{\kvec}{{vec}}
\DeclareMathOperator{\diag}{{diag}}
\title{
	\ADMM for the \SDP relaxation of the \QAP\thanks{This work is
	partially supported by NSERC and AFOSR.}
}
\author{
	\href{https://uwaterloo.ca/combinatorics-and-optimization/about/people/deolivei}
  {Danilo Elias Oliveira}
  \thanks{
        Dept. of Combinatorics and Optimization, University of Waterloo.
  }
\and
        \href{http://orion.math.uwaterloo.ca/~hwolkowi/}
{Henry Wolkowicz}
        \thanks{Dept. of Combinatorics and Optimization, University of Waterloo.
		Research supported by The Natural Sciences and Engineering
                Research Council of Canada
                and by AFOSR.
                Email: {hwolkowicz@uwaterloo.ca}}
\and
\href{http://www.ima.umn.edu/~yangyang/}
  {Yangyang Xu}
\thanks{Institute for Mathematics and its Applications (IMA),
University of Minnesota}
}
\date{\today}
\begin{document}
\maketitle

\begin{abstract}
	The semidefinite programming \SDP relaxation has
	proven to be extremely strong for many hard discrete
	optimization problems. This is in particular true for the
	quadratic assignment problem \QAPp, arguably one of the hardest NP-hard discrete optimization problems. There are
	several difficulties that arise in efficiently  solving the
	\SDP relaxation, e.g.,~increased dimension; inefficiency of the
	current primal-dual interior point solvers in terms of both time and
	accuracy; and difficulty and high expense in adding cutting plane 
	constraints.

       We propose using the alternating direction method of 
      multipliers \ADMM  to solve the \SDP relaxation. This first order approach allows for inexpensive iterations, a method of
      cheaply obtaining low rank solutions, as well a
      trivial way of adding cutting plane inequalities.
      When compared to current approaches and current best available
      bounds we obtain remarkable robustness, efficiency and improved
      bounds.
\end{abstract}

{\bf Keywords:}
Quadratic assignment problem, semidefinite programming relaxation, 
alternating direction method of moments, large scale.

{\bf Classification code:}
90C22, 90B80, 90C46, 90-08

\tableofcontents
\listoftables
\section{Introduction}
The \textdef{quadratic assignment problem} (\QAPp), in the trace formulation is
\index{\QAPp, quadratic assignment problem}
\begin{equation}\label{qap}
	\textdef{$p_X^*$}:=\min_{X\in\Pi_n} \langle AXB-2C, X\rangle,
\end{equation}
\index{optimal value \QAPp, $p^*_X$}
where $A,B\in \Sn$ are real symmetric $n\times n$ matrices, $C$ is a real
$n\times n$ matrix, $\langle\cdot\,,\cdot\rangle$ denotes the \textdef{trace
inner product, $\langle Y,X\rangle=\trace YX^\top$},
and $\Pi_n$ denotes the set of $n\times n$
permutation matrices. A typical
objective of the \QAP is to assign $n$ facilities to $n$ locations
while minimizing total cost. The assignment cost is the sum of costs
using the flows in $A_{ij}$ between a pair of facilities $i,j$
multiplied by the
distance in $B_{st}$ between their assigned locations $s,t$
and adding on the
location costs of a facility $i$ in a position $s$ given in $C_{is}$.

It is well known that the \QAP is an NP-hard problem and that problems
with size as moderate as $n=30$ still remain difficult to solve. Solution
techniques rely on calculating efficient lower bounds. An important tool for
finding lower bounds is the work in \cite{KaReWoZh:94} that provides a
\textdef{semidefinite programmming} (\SDPp), relaxation of \eqref{qap}.
The methods of choice for \SDP are based on a
\textdef{primal-dual interior-point, p-d i-p},
approach. These methods cannot solve large problems,
have difficulty in obtaining high accuracy solutions and cannot
properly exploit sparsity. Moreover, it is very expensive to add on
nonnegativity and cutting plane constraints.
The current state for finding bounds and solving \QAP is given in
e.g.,~\cite{MR2546331,MR2004391,AnsBrix:99,MR2166543,rendlsotirov:06}.

\index{\SDPp, semidefinite programmming}

In this paper we study an \textdef{alternating direction method of multipliers}
(\ADMMp), for solving the \SDP   relaxation of the \QAPp.
We compare this with the best known results given in
\cite{rendlsotirov:06} and with the best known bounds found at SDPLIB
\cite{BurKarRen91}. and with
a p-d i-p methods based on the so-called
HKM direction. We see that the \ADMM method is significantly
faster and obtains high accuracy solutions. In addition
there are advantages in obtaining low rank \SDP solutions 
that provide better feasible
approximations for the \QAP for upper bounds. Finally, it is
trivial to add nonnegativity and rounding constraints while iterating so
as to obtain significantly stronger bounds and also maintain sparsity
during the iterations.

We note that previous success for \ADMM for \SDP in presented
in \cite{MR2741485}. A detailed
survey article for \ADMM can be found in
\cite{BoydParikhChuPeleatoEckstein:11}.

\section{A New Derivation for the \SDP Relaxation}
We start the derivation from the following equivalent
quadratically constrained quadratic problem
\begin{align}\label{qc1}
\min_X &\, \langle AXB-2C, X\rangle\cr
\st & X_{ij}X_{ik}=0,\ X_{ji}X_{ki}=0,\,\forall i,\,\forall j\neq k,\cr
& X_{ij}^2-X_{ij}=0,\,\forall i,j,\\
& \sum_{i=1}^n X_{ij}^2-1=0,\,\forall j,\ \sum_{j=1}^n X_{ij}^2-1=0,\,\forall i.\nonumber
\end{align}

\begin{remark}
Note that the quadratic orthogonality
constraints $X^\top X=I,\,XX^\top=I$, and
the linear row and column sum constraints
$Xe=e, \,X^\top e=e$ can all be linearly represented using linear combinations
of those in \eqref{qc1}.

In addition, the first set of constraints,
the elementwise orthogonality of the row and columns of $X$, are
referred to as the \textdef{gangster constraints}. They are particularly
strong constraints and enable many of the other constraints to be
redundant. In fact, after the facial reduction done below, many of these
constraints also become redundant. (See the definition of the index set
$J$ below.)
\end{remark}

The Lagrangian for \eqref{qc1} is
\begin{align*}
\cL_0(X, U, V, W, u, v) = & \langle AXB-2C, X\rangle + \sum_{i=1}^n\sum_{j\neq k}U_{jk}^{(i)}X_{ij}X_{ik}+\sum_{i=1}^n\sum_{j\neq k}V_{jk}^{(i)}X_{ji}X_{ki}+\sum_{i,j}W_{ij}(X_{ij}^2-X_{ij})\\
& +\sum_{j=1}^n u_j\left(\sum_{i=1}^n X_{ij}^2-1\right)+\sum_{i=1}^n v_i\left(\sum_{j=1}^n X_{ij}^2-1\right).
\end{align*}
The dual problem is a maximization of the dual functional $d_0$,
\begin{equation}\label{dual1}
	\max\ d_0(U,V,W,u,v):=\min_X \cL_0(X,U,V,W,u,v).
\end{equation}
To simplify the dual problem, we homogenize the $X$ terms in $\cL_0$ by
multiplying a unit scalar $x_0$ to degree-1 terms and adding the single
constraint $x_0^2=1$ to the Lagrangian. We let
\begin{align*}
\cL_1(X,x_0,U, V, W, w_0,u, v) = & \langle AXB-2x_0C, X\rangle + \sum_{i=1}^n\sum_{j\neq k}U_{jk}^{(i)}X_{ij}X_{ik}+\sum_{i=1}^n\sum_{j\neq k}V_{jk}^{(i)}X_{ji}X_{ki}+\sum_{i,j}W_{ij}(X_{ij}^2-x_0X_{ij})\\
& +\sum_{j=1}^n u_j\left(\sum_{i=1}^n X_{ij}^2-1\right)+\sum_{i=1}^n v_i\left(\sum_{j=1}^n X_{ij}^2-1\right)+w_0(x_0^2-1).
\end{align*}
This homogenization technique is the same as that in \cite{KaReWoZh:94}.
The new dual problem is
\begin{equation}\label{dual2}
	\max\ d_1(U,V,W,w_0,u,v):=\min_{X,x_0} \cL_1(X,x_0,U, V, W, w_0,u, v).
\end{equation}
Note that $d_1\le d_0$. Hence, our relaxation still yields a lower bound
to \eqref{qc1}. In fact, the relaxations give the same lower bound. This
follows from strong duality of the trust region subproblem as shown in
\cite{KaReWoZh:94}.
Let $x=\kvec (X)$, $y=[x_0; x]$, and $w=\kvec (W)$, where $x,w$ is the
vectorization, columnwise, of $X$ and $W$, respectively. Then
\index{$\kvec$}
\begin{align*}
\cL_1(X, x_0,U, V, W, w_0, u, v) = y^\top\left[L_Q+\cB_1(U)+\cB_2(V)+\text{Arrow}(w,w_0)+\cK_1(u)+\cK_2(v)\right]y-e^\top(u+v)-w_0,
\end{align*}
where 
\begin{align*}
&\cK_1(u)=\text{blkdiag}(0, u\otimes I),\quad \cK_2(v)=\text{blkdiag}(0,I\otimes v),\\
&\text{Arrow}(w,w_0)=\left[\begin{array}{cc}w_0 & - \frac{1}{2}w^\top\\
- \frac{1}{2}w & \text{Diag}(w)\end{array}\right]
\end{align*} 
and 
$$\cB_1(U)=\text{blkdiag}(0, \tilde{U}),\quad\cB_2(V)=\text{blkdiag}(0,\tilde{V}).$$ Here, $\tilde{U}$ and $\tilde{V}$ are $n\times n$ block matrices. $\tilde{U}$ has zero diagonal blocks and the $(j,k)$-th off-diagonal block to be the diagonal matrix $\text{Diag}(U_{jk}^{(1)},\ldots,U_{jk}^{(n)})$ for all $j\neq k$, and $\tilde{V}$ has zero off-diagonal blocks and the $i$-th diagonal block to be $\left[\begin{array}{cccc}0 & V_{12}^{(i)}  & \cdots & V_{1n}^{(i)}\\
V_{21}^{(i)} & 0  & \cdots & V_{2n}^{(i)}\\
\vdots & \vdots & \ddots & \vdots \\
V_{n1}^{(i)} & V_{n2}^{(i)} & \cdots  & 0\end{array}\right]$. Hence, the dual problem \eqref{dual2} is
\begin{align}\label{dqc1}
\max\ & -e^\top(u+v)-w_0\\
\st & L_Q+\cB_1(U)+\cB_2(V)+\text{Arrow}(w,w_0)+\cK_1(u)+\cK_2(v)\succeq 0.\nonumber
\end{align}
Taking the dual of \eqref{dqc1}, we have the \SDP relaxation of \eqref{qc1}:
\begin{align}\label{nrqap}
\min\ & \langle L_Q, Y\rangle\cr
\st & \cG_J(Y) = E_{00},\ \diag(\bar{Y})=y_0,\\
& \text{trace}(\tilde{Y}_{ii})=1,\,\forall i,\ \sum_{i=1}^n \tilde{Y}_{ii}=I,\cr
&Y\succeq 0,\nonumber
\end{align}
where $\tilde{Y}_{ij}$ is an $n\times n$ matrix for each $(i,j)$, and we have assumed the block structure
\begin{equation}
	\label{eq:Ybar}
	Y=\left[\begin{array}{cc}y_{00} & y_0^\top\\
y_0 & \bar{Y}\end{array}\right]; \qquad
\bar Y \text{ made of
$n\times n$ block matrices } \tilde{Y}=(\tilde{Y}_{ij}).
\end{equation}
The index set $J$ and the gangster operator $\cG_J$ are defined
properly below in Definition \ref{def:J}.
(By abuse of notation this is done after the facial reduction which
results in a smaller $J$.)

\begin{remark}
If one more feasible quadratic constraint $q(X)$ can be added to \eqref{qc1} and $q(X)$ cannot be linearly represented by those in \eqref{qc1}, the relaxation following the same derivation as above can be tighter. We conjecture that no more such $q(X)$ exists, and thus \eqref{nrqap} is the tightest among all Lagrange dual relaxation from a quadratically constrained program like \eqref{qc1}.
However, this does not mean that more linear inequality constraints
cannot be added, i.e.,~\textdef{linear cuts}.
\end{remark}

\begin{theorem}[\cite{KaReWoZh:94}]
The matrix $Y$ is feasible for \eqref{nrqap} \emph{if,
and only if}, it is feasible for \eqref{qap2}.
\qed
\end{theorem}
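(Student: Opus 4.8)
The statement is an equality of two spectrahedra, so the plan is to show that the affine constraint systems defining \eqref{nrqap} and \eqref{qap2} cut out the same slice of the cone $\{Y \succeq 0\}$ of matrices carrying the border structure \eqref{eq:Ybar}. Reading both relaxations as semidefinite lifts of the quadratically constrained program \eqref{qc1} in different coordinates, I would set $y = [x_0;\kvec(X)]$, form $Y = yy^\top$, and observe that every quadratic equality in \eqref{qc1} becomes a linear equality in the entries of $Y$; relaxing the rank-one condition to $Y \succeq 0$ then produces the region in question. The task is therefore to build an explicit dictionary between the constraints of \eqref{qc1} and the matrix equations of \eqref{nrqap}, to check that the same dictionary reproduces \eqref{qap2}, and to conclude that a single $Y \succeq 0$ satisfies one system exactly when it satisfies the other.

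The dictionary is read off block by block from \eqref{eq:Ybar}, using that in the rank-one case each block $\tilde Y_{ij}$ is the outer product of columns $i$ and $j$ of $X$. First, the normalization $x_0^2 = 1$ fixes the corner entry, and together with the elementwise orthogonality constraints $X_{ij}X_{ik} = 0$, $X_{ji}X_{ki} = 0$ this yields the gangster equation $\cG_J(Y) = E_{00}$: the right-hand side records the $1$ in the $(0,0)$ slot while the vanishing products are exactly the zero pattern indexed by $J$. Second, the idempotency constraints $X_{ij}^2 = X_{ij}$ lift to $\diag(\bar Y) = y_0$, equating each diagonal entry of $\bar Y$ with the matching border entry that carries the $x_0 X_{ij}$ term. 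Third, the column constraints $\sum_i X_{ij}^2 = 1$ become $\trace(\tilde Y_{jj}) = 1$, and the row constraints $\sum_j X_{ij}^2 = 1$ assemble across blocks into the diagonal of the single matrix equation $\sum_i \tilde Y_{ii} = I$. Verifying each translation is a bookkeeping exercise in the $\kvec$ and block indexing fixed by \eqref{eq:Ybar}.

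The step I expect to be the main obstacle is reconciling the two systems once facial reduction has shrunk the index set $J$, as the paper flags. Several relations do not appear explicitly in the reduced system: the off-diagonal part of $\sum_i \tilde Y_{ii} = I$ encodes $(XX^\top)_{st} = 0$ for $s \neq t$, and the off-diagonal trace relations $\trace(\tilde Y_{jk}) = 0$ encode $(X^\top X)_{jk} = 0$, so I must show these are already forced by $Y \succeq 0$ together with the surviving constraints. The lever is the standard fact that a zero diagonal entry of a positive semidefinite matrix forces its entire row and column to vanish, combined with the explicit minimal face (of the form $Y = V R V^\top$) on which every feasible $Y$ lives; on that face the retained gangster, diagonal, and trace constraints propagate to pin down precisely the dropped entries. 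Carrying out this redundancy check, and thereby confirming the mutual implication of the two constraint lists in both directions, is the substantive content of the argument; once it is done the two affine systems coincide on $\{Y \succeq 0\}$, which is exactly the asserted equivalence of feasibility.
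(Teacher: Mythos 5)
The statement here is quoted from \cite{KaReWoZh:94} with no proof in the paper, so your proposal must stand on its own as a reconstruction, and it is missing the central mechanism. Your translation of the quadratic constraints of \eqref{qc1} into the linear constraints of \eqref{nrqap} is fine, but that is only the derivation of \eqref{nrqap}, not the theorem; the theorem compares \eqref{nrqap} with \eqref{qap2}. Of the two implications, the easy one is that $Y=\hat VR\hat V^\top$, $R\succeq 0$, $\cG_J(Y)=E_{00}$ forces all remaining constraints of \eqref{nrqap}: there your levers (redundancy of dropped entries, propagation of retained constraints) do apply, because face membership is a hypothesis. The substantive direction is the converse: every $Y\succeq 0$ satisfying the constraint list of \eqref{nrqap} must lie in the face, i.e.\ $Y=\hat VR\hat V^\top$ for some $R\succeq 0$. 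For this your proposal offers only a circular lever --- ``the explicit minimal face on which every feasible $Y$ lives'' is precisely what has to be proved. The actual argument runs: (i) from the arrow constraint $\diag(\bar Y)=y_0$, the trace and block-sum constraints, and the gangster constraints, compute $v^\top Yv=0$ for each of the $2n$ homogenized row/column-sum vectors $v_i=(-1,\,(e\otimes e_i)^\top)^\top$ and $w_j=(-1,\,(e_j\otimes e)^\top)^\top$; (ii) since $Y\succeq 0$, $v^\top Yv=0$ forces $Yv=0$; (iii) invoke the separate, nontrivial lemma that these vectors span $\range(\hat V)^\perp$ (equivalently, the columns of $\hat V$ form a basis of their common null space), so that $\range(Y)\subseteq\range(\hat V)$ and hence $Y=\hat VR\hat V^\top$ with $R=\hat V^\dagger Y(\hat V^\dagger)^\top\succeq 0$. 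Steps (i)--(iii), and especially the range/null-space characterization of $\hat V$ in (iii), are the heart of the facial-reduction theorem of \cite{KaReWoZh:94}; none of them appear in your plan, and the ``zero diagonal entry of a PSD matrix'' fact alone cannot substitute for them.

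There is a second, sharper problem: the reduced-$J$ subtlety cuts in the opposite direction from how you deploy it. The gangster constraints dropped from $J$ are redundant only \emph{on the face} (that is exactly what the parenthetical remark in Definition~\ref{def:J} asserts); they are \emph{not} implied by $Y\succeq 0$ together with the surviving constraints of \eqref{nrqap}. Indeed, step (i) above, run with the reduced $J$, leaves precisely the dropped entries uncancelled, and genuine moment matrices witness the failure: let $c_1$ be uniform on $\{1,\dots,n\}$, set $c_j=c_1+j-1 \pmod n$ for $j\le n-1$ and $c_n=c_{n-1}$, let $X$ be the random $0$--$1$ matrix with $Xe_j=e_{c_j}$, and let $Y=\EE\big[(1,\,\kvec(X)^\top)^\top(1,\,\kvec(X)^\top)\big]$. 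This $Y$ is positive semidefinite and satisfies the arrow, trace, block-sum and every \emph{retained} gangster constraint (the only column collision is between columns $n-1$ and $n$, whose block lies in the dropped last block column), yet $(\tilde Y_{n-1,n})_{ii}=1/n\neq 0$, so $Y$ violates the dropped constraints and is not of the form $\hat VR\hat V^\top$. Hence your stated plan --- to show the dropped relations ``are already forced by $Y\succeq 0$ together with the surviving constraints'' --- cannot be carried out in the direction where it is needed. A correct proof must formulate the equivalence with the \emph{full} gangster index set (as \cite{KaReWoZh:94} do, and as the paper's own ``abuse of notation'' caveat implicitly concedes), and then justify shrinking $J$ as a separate redundancy statement valid only for matrices already on the face.
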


As above, let $x=\kvec X\in \R^{n^2}$ be the vectorization of $X$ by column.
$Y$ is the original matrix variable of the \SDP relaxation
before the facial reduction. It can be motivated from the
\textdef{lifting}
$Y= \begin{pmatrix} 1 \cr \kvec X \end{pmatrix}
\begin{pmatrix} 1 \cr \kvec X \end{pmatrix}^\top$. 

The \SDP relaxation of \QAP presented in \cite{KaReWoZh:94} uses
\textdef{facial reduction} to guarantee strict feasibility. The \SDP
obtained is
\begin{equation}
\label{qap1}
\begin{array}{rl}
	\textdef{$p_R^*$}:=\min_R &  \langle  L_Q, \hat{V}R\hat{V}^\top\rangle \\
	\st &  \cG_J(\hat{V}R\hat{V}^\top)=E_{00}\\
	    &   R\succeq 0,
\end{array}
\end{equation}
\index{optimal value primal \SDP relaxation, $p^*_R$}
where the so-called \textdef{gangster operator, $\cG_J$},
fixes all elements indexed by $J$ and zeroes out all others,
\index{$\cG_J$, gangster operator}
\begin{equation}
	\label{eq:LVhat}
L_Q=\left[\begin{array}{cc}0 & -\mbox{vec}(C)^\top\\ -\mbox{vec}(C) &
B\otimes A \end{array}\right], \qquad
\hat{V}=\left[\begin{array}{cc}1 & 0\\ \frac{1}{n}e & V\otimes V\end{array}\right]
\end{equation}
\index{$e$, ones vector}
with $e$ being the vector of all \textdef{ones},
of appropriate dimension
and $V\in\RR^{n\times(n-1)}$ being a basis matrix of the orthogonal
complement of $e$, e.g.,~$V=\begin{bmatrix} I_{n-1}\cr\hline  -e \end{bmatrix}$.
We let $Y=\hat{V}R\hat{V}^\top\in \Snn$.

\begin{lemma}[\cite{KaReWoZh:94}]
	\label{lem:strprimalfeas}
	The matrix \textdef{$\hat R$} defined by
	\[
		\hat R:=
		\left[\begin{array}{c|c}
			1 & 0 \cr
				\hline
			0 & \frac 1{n^2(n-1)}
			        \left(nI_{n-1}-E_{n-1}\right) \otimes
			        \left(nI_{n-1}-E_{n-1}\right)
		\end{array}\right] \in \S^{(n-1)^2+1}_{++}
	\]
	is (strictly) feasible for \eqref{qap1}.
	\index{strictly feasible pair, $(\hat R,\hat Y,\hat Z)$}
	\qed
\end{lemma}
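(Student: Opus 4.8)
The statement bundles two claims whose conjunction is strict feasibility of $\hat R$ for \eqref{qap1}: that $\hat R$ lies in the interior of the semidefinite cone (so $\hat R\succ 0$), and that the single affine constraint $\cG_J(\hat V\hat R\hat V^\top)=E_{00}$ is met. I would treat these separately, since the positive definiteness is immediate while the gangster constraint carries the real content. For the former, $\hat R$ is block diagonal with a $1$ in the $(0,0)$ slot and lower block $\frac1{n^2(n-1)}M\otimes M$, where $M:=nI_{n-1}-E_{n-1}$. As $E_{n-1}$ has eigenvalues $n-1$ and $0$, the eigenvalues of $M$ are $1$ and $n$, so $M\succ 0$; a Kronecker product of positive definite matrices is positive definite and the prefactor is positive, hence the lower block, and therefore all of $\hat R$, is positive definite and of the asserted order $(n-1)^2+1$.

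The crux is evaluating $Y=\hat V\hat R\hat V^\top$ blockwise. Using the block form of $\hat V$ in \eqref{eq:LVhat}, I expect $Y_{00}=1$, first row and column equal to $\frac1n e^\top$ and $\frac1n e$, and lower-right block $\bar Y=\frac1{n^2}ee^\top+\frac1{n^2(n-1)}(V\otimes V)(M\otimes M)(V\otimes V)^\top$. The plan is to collapse the Kronecker sandwich by the mixed-product rule to $(VMV^\top)\otimes(VMV^\top)$, reducing everything to one computation. With the explicit $V$ of \eqref{eq:LVhat} and the identities $Me=e$, $e^\top M=e^\top$, I anticipate the clean form $VMV^\top=nI_n-E_n=:N$, so that $\bar Y=\frac1{n^2}ee^\top+\frac1{n^2(n-1)}N\otimes N$.

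I would then partition $\bar Y$ into the $n\times n$ blocks $\tilde Y_{ij}$ of \eqref{eq:Ybar}. The $(i,j)$ block of $ee^\top$ is $E_n$ and that of $N\otimes N$ is $N_{ij}N=(n\delta_{ij}-1)N$; substituting should give $\tilde Y_{ii}=\frac1n I_n$ and, for $i\neq j$, $\tilde Y_{ij}=\frac1{n(n-1)}(E_n-I_n)$. Since the gangster set $J$ selects exactly the off-diagonal entries within diagonal blocks (from $X_{ji}X_{ki}=0$) and the diagonal entries within off-diagonal blocks (from $X_{ij}X_{ik}=0$), together with $(0,0)$, and since $\frac1n I_n$ has zero off-diagonal while $E_n-I_n$ has zero diagonal, every $J$-indexed entry of $\bar Y$ vanishes while $Y_{00}=1$; hence $\cG_J(Y)=E_{00}$.

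The only genuine obstacle is the identity $VMV^\top=nI_n-E_n$; once it is in hand the block structure of $\bar Y$ renders the gangster conditions transparent. A minor wrinkle is that the precise $J$ is specified only later, but the derived pattern—diagonal blocks proportional to $I_n$ and off-diagonal blocks with vanishing diagonal—is exactly what any gangster set arising from the row and column orthogonality constraints requires, so the verification is insensitive to the exact form of $J$.
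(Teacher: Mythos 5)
Your proof is correct, but note that the paper itself gives no proof of this lemma---it is stated with a closing tombstone and attributed to \cite{KaReWoZh:94}---so the comparison must be with that reference. There, $\hat R$ is not verified by direct computation; instead $\hat V\hat R\hat V^\top$ is identified with the barycenter $\frac{1}{n!}\sum_{X\in\Pi_n}\left(\begin{smallmatrix}1\\ \kvec(X)\end{smallmatrix}\right)\left(\begin{smallmatrix}1\\ \kvec(X)\end{smallmatrix}\right)^{\top}$ of the lifted permutation matrices, so the gangster equations hold because every lifted point satisfies them and they are linear in $Y$, while strictness follows from the barycenter having maximal rank $(n-1)^2+1$. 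Your route is a self-contained algebraic check, and its key steps are sound: $M:=nI_{n-1}-E_{n-1}$ has eigenvalues $1$ and $n$, so $\hat R\succ 0$; the mixed-product rule collapses the Kronecker sandwich to $(VMV^\top)\otimes(VMV^\top)$; and the identity $VMV^\top=nI_n-E_n$ does hold for the specific $V=\begin{bmatrix}I_{n-1}\\ -e^\top\end{bmatrix}$ of \eqref{eq:LVhat}, via $Me=e$, $e^\top M=e^\top$, $e^\top e=n-1$. This yields $\tilde Y_{ii}=\frac1n I_n$ and $\tilde Y_{ij}=\frac1{n(n-1)}(E_n-I_n)$ for $i\neq j$, which indeed annihilate every index of $J$ other than $(0,0)$, where $Y_{00}=1$. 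Two caveats are worth making explicit. First, your computation is tied to that particular unscaled $V$: for a general basis of the orthogonal complement of $e$ (e.g., the orthonormalized choices used later in the paper), $VMV^\top\neq nI_n-E_n$ and this specific $\hat R$ would \emph{not} satisfy the gangster constraint, so the lemma must be read with the $V$ of \eqref{eq:LVhat}. Second, your claim that the argument is ``insensitive to the exact form of $J$'' should be justified precisely as follows: the $J$ of Definition~\ref{def:J} is a \emph{subset} of the index set you show to vanish (its exclusions only remove constraints), hence the conclusion holds a fortiori. As for what each approach buys: yours is elementary and checkable line by line; the barycenter argument of \cite{KaReWoZh:94} explains where $\hat R$ comes from and delivers the maximal-rank, relative-interior statement without any blockwise computation.
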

\begin{definition}
	\label{def:J}
The gangster operator $\cG_J:\Snn \rightarrow \Snn$ and is defined by
\[
	\cG_J(Y)_{ij}=\left\{
		\begin{array}{cc}
			Y_{ij} & \text{ if } (i,j)\in J \text{ or } (j,i)\in J\\
			0  & \text{otherwise}
		\end{array}
		\right.
\]
By abuse of notation, we let the same symbol denote the projection onto
$\R^{|J|}$. We get the two equivalent primal constraints:
\[
	 \cG_J(\hat{V}R\hat{V}^\top)=E_{00}\in \Snn; \qquad \qquad
	 \cG_J(\hat{V}R\hat{V}^\top)=\cG_J(E_{00})\in \R^{|J|}.
\]
Therefore, the dual variable for the first form is $Y\in \Snn$. However,
the dual variable for the second form is $y\in  \R^{|J|}$ with the
adjoint now yielding $Y=\cG^*_J(y) \in \Snn$ obtained by symmetrization
and filling in the missing elements with zeros.

The \textdef{gangster index set, $J$} is defined to be ${(00)}$
union the set of of indices
$i<j$ in the matrix $\bar Y$ in \eqref{eq:Ybar} corresponding to:
\begin{enumerate}
	\item
the off-diagonal elements in the $n$ diagonal blocks;
	\item
the diagonal elements in the off-diagonal blocks except for the last
column of off-diagonal blocks and also not the $(n-2),(n-1)$ off-diagonal block.
(These latter off-diagonal block constraints are redundant after the facial
reduction.)
\end{enumerate}
\index{$J$, gangster index set}
\end{definition}

We note that the gangster operator is self-adjoint, $\cG_J^*=\cG_J$.
Therefore, the dual of \eqref{qap1} can be written as the following.
\begin{equation}
\label{qap1D}
\begin{array}{rll}
	\textdef{$d_Y^*$}:=\max\limits_Y &  \langle  E_{00}, Y\rangle &\quad (=Y_{00}) \\
\st &  \hat{V}^\top \cG_J(Y)\hat{V} \preceq \hat{V}^\top L_Q \hat{V}\\
\end{array}
\end{equation}
\index{optimal value dual \SDP relaxation, $d^*_Y$}
Again by abuse of notation, using the same symbol twice,
we get the two equivalent dual constraints:
\[
 \hat{V}^\top \cG_J(Y)\hat{V} \preceq \hat{V}^\top L_Q \hat{V};
	  \qquad \qquad
 \hat{V}^\top \cG^*_J(y)\hat{V} \preceq \hat{V}^\top L_Q \hat{V}.
\]
As above, the dual variable for the first form is $Y\in \Snn$ and
for the second form is $y\in  \R^{|J|}$. We have used $\cG^*$ for the
second form to emphasize that only the first form is self-adjoint.

\begin{lemma}[\cite{KaReWoZh:94}]
	\index{strictly feasible pair, $(\hat R,\hat Y,\hat Z)$}
	\label{lem:strdualfeas}
	The matrices \textdef{$\hat Y$},\textdef{$\hat Z$}, with
	$M>0$ sufficiently large, defined by
	\index{strictly feasible pair, $(\hat R,\hat Y,\hat Z)$}
	\[
		\hat Y:=
		M\left[\begin{array}{c|c}
			n & 0 \cr
				\hline
				0 & I_n\otimes (I_n-E_n)
		\end{array}\right]\in \S^{(n-1)^2+1}_{++}, \quad
\hat Z:= \hat{V}^\top L_Q \hat{V}- \hat{V}^\top \cG_J(\hat Y)\hat{V}
		\in \S^{(n-1)^2+1}_{++}.
	\]
	and are (strictly) feasible for \eqref{qap1D}.
	\qed
\end{lemma}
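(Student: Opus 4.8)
The plan is to reduce the whole lemma to a single positive-definiteness statement and then settle it by a Kronecker-product computation. By definition, strict feasibility of $\hat Y$ for \eqref{qap1D} means that the linear matrix inequality holds strictly, i.e.\ $\hat V^\top \cG_J(\hat Y)\hat V \prec \hat V^\top L_Q \hat V$; this is exactly the assertion that the slack
\[
\hat Z \;=\; \hat V^\top L_Q\hat V \;-\; \hat V^\top\cG_J(\hat Y)\hat V
\]
is positive definite in $\S^{(n-1)^2+1}$. Thus the content of the lemma splits into (i) the structural fact $\hat Y\succ0$ and (ii) the inequality $\hat Z\succ0$, and essentially all the work is in (ii). The membership $\hat Y\succ0$ is the easy part: $\hat Y$ is block diagonal, so it reduces to a spectral computation on its single Kronecker block $I_n\otimes(I_n-E_n)$ together with its corner entry, and then $\hat Y=M(\cdots)$ with $M>0$.

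For (ii) the first move is to exploit linearity. Writing $\hat Y = M\,Y_1$ for the fixed matrix $Y_1$ appearing in the statement, and using that both $\cG_J$ and the congruence $X\mapsto \hat V^\top X\hat V$ are linear, I get $\hat V^\top\cG_J(\hat Y)\hat V = M\,S$ with $S:=\hat V^\top\cG_J(Y_1)\hat V$ \emph{independent of $M$}, so that $\hat Z = \hat V^\top L_Q\hat V - M\,S$, where $\hat V^\top L_Q\hat V$ is a fixed (in general indefinite) symmetric matrix. I would then simplify $\cG_J(Y_1)$: since $Y_1$ is supported exactly on the corner and on the diagonal Kronecker blocks, and the gangster pattern $J$ retains precisely the off-diagonal entries of those diagonal blocks (see Definition \ref{def:J}), $\cG_J(Y_1)$ has a clean closed form. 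Evaluating $S$ is then a routine application of $(A\otimes B)(C\otimes D)=AC\otimes BD$, of $e=e_n\otimes e_n$, and above all of $V^\top e_n=0$ (hence $(V\otimes V)^\top e=0$): these identities annihilate every cross term with the corner/arrow part and leave $S$ block diagonal, of the shape $\mathrm{blkdiag}\!\big(\ast,\ \ast\cdot(V^\top V)\otimes(V^\top V)\big)$.

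The crucial point, and the main obstacle, is not merely to evaluate $S$ but to verify that it is \emph{definite on all of} $\S^{(n-1)^2+1}$, with the sign that makes $-M\,S$ positive definite (equivalently, that the gangster dual multiplier $\hat Y$ injects an arbitrarily large positive-definite correction into the slack). This hinges on $V^\top V\succ0$, which holds because $V$ has full column rank $n-1$, whence $(V^\top V)\otimes(V^\top V)\succ0$, together with a nonzero corner entry. The danger to rule out is that $S$ might only be \emph{semi}definite: any direction in $\ker S$ would be untouched by $M\,S$, so if $\hat V^\top L_Q\hat V$ were indefinite there, no choice of $M$ could repair it. Confirming that $S$ is nonsingular and sign-definite on the entire reduced space is therefore the heart of the argument.

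Once $S$ is known to be definite with $-S\succ0$, I finish by a Weyl eigenvalue bound:
\[
\lambda_{\min}(\hat Z)\;\ge\;\lambda_{\min}\!\big(\hat V^\top L_Q\hat V\big)\;+\;M\,\lambda_{\min}(-S),
\]
so that any $M$ larger than $-\lambda_{\min}(\hat V^\top L_Q\hat V)/\lambda_{\min}(-S)$ (and any $M>0$ when that ratio is nonpositive) forces $\hat Z\succ0$. This makes precise the phrase ``$M>0$ sufficiently large'' and establishes strict dual feasibility, which, combined with the strictly feasible $\hat R$ of Lemma \ref{lem:strprimalfeas}, yields the strictly feasible pair for the primal--dual pair \eqref{qap1}--\eqref{qap1D}.
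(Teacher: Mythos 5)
You should first note that the paper itself contains no proof of this lemma: it is quoted from \cite{KaReWoZh:94} and closed with a \qed, so your attempt has to stand entirely on its own. Its skeleton is the right shape --- reduce everything to positive definiteness of the slack $\hat Z$, pull the scalar $M$ out by linearity, evaluate $S:=\hat V^\top\cG_J(Y_1)\hat V$ by Kronecker identities and $V^\top e_n=0$, and finish with an eigenvalue bound --- and you correctly call the sign-definiteness of $S$ ``the heart of the argument.'' But that is exactly the step you never carry out, and it comes out with the \emph{opposite} sign. Since $\hat Y$ is supported precisely on the gangster pattern (the corner entry plus the off-diagonal entries of the diagonal blocks; the diagonal entries of $I_n-E_n$ are zero and the off-diagonal blocks vanish), one has $\cG_J(Y_1)=Y_1$, and the computation you outline gives, using $(I_n\otimes(I_n-E_n))(V\otimes V)=V\otimes\bigl(V-e_ne_n^\top V\bigr)=V\otimes V$, $(V\otimes V)^\top e=(V^\top e_n)\otimes(V^\top e_n)=0$, and $e^\top\bigl(I_n\otimes(I_n-E_n)\bigr)e=n^2(1-n)$:
\[
S=\hat V^\top Y_1\hat V
=\begin{bmatrix} n+\tfrac{1}{n^2}\,n^2(1-n) & 0\\[2pt] 0 & (V^\top V)\otimes(V^\top V)\end{bmatrix}
=\mathrm{blkdiag}\bigl(1,\ (V^\top V)\otimes(V^\top V)\bigr)\ \succ\ 0,
\]
i.e.\ $S$ is \emph{positive} definite, not negative definite. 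Consequently $\hat Z=\hat V^\top L_Q\hat V-MS$ satisfies $\lambda_{\max}(\hat Z)\le\lambda_{\max}\bigl(\hat V^\top L_Q\hat V\bigr)-M\lambda_{\min}(S)\to-\infty$ as $M\to+\infty$, so no large positive $M$ can make $\hat Z\succ0$: your concluding Weyl step runs in the wrong direction. Your ``easy part'' fails as well: $I_n-E_n$ has eigenvalue $1-n<0$ (eigenvector $e_n$), so $I_n\otimes(I_n-E_n)$, and with it $\hat Y$, is indefinite for every $M\neq0$; moreover $\hat Y$ lives in $\S^{n^2+1}$, not $\S^{(n-1)^2+1}$ (only $\hat Z$ has the reduced dimension).

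What your computation would actually uncover, had you done it, is that the statement as transcribed in this paper carries sign and dimension typos relative to \cite{KaReWoZh:94}. The correct dual Slater point for \eqref{qap1D} is $\hat Y=-M\,\mathrm{blkdiag}\bigl(n,\ I_n\otimes(I_n-E_n)\bigr)$ (equivalently, replace $I_n-E_n$ by $E_n-I_n$), for which $\hat Z=\hat V^\top L_Q\hat V+MS\succ0$ once $M>0$ exceeds $-\lambda_{\min}\bigl(\hat V^\top L_Q\hat V\bigr)/\lambda_{\min}(S)$, precisely because $S\succ0$; and no positive-definiteness claim about $\hat Y$ itself should be made --- none is needed, since $Y$ is a free variable in \eqref{qap1D} and strict dual feasibility means only $\hat Z\succ0$. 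So the gap is concrete: the definiteness assertion on which your entire proof rests was guessed rather than verified, the guess is wrong, and verifying it is not a formality --- it is the whole content of the lemma, and it forces a correction of the statement before any proof can go through.
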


\section{A New \ADMM Algorithm for the \SDP Relaxation}
We can write \eqref{qap1} equivalently as
\begin{equation}\label{qap2}
\min_{R, Y}\, \langle L_Q, Y\rangle, \st \cG_J(Y)=E_{00},\, Y=\hat{V}R\hat{V}^\top,\, R\succeq 0.
\end{equation}
The \textdef{augmented Lagrange} of \eqref{qap2} is
\begin{equation}
	\label{eq:augmlagr}
	\cL_A(R, Y, Z)=\langle L_Q, Y\rangle+ \langle Z, Y-\hat{V}R\hat{V}^\top\rangle + \frac{\beta}{2}\|Y-\hat{V}R\hat{V}^\top\|_F^2.
\end{equation}
Recall that $(R,Y,Z)$ are the primal reduced, primal, and dual variables
respectively. We denote $(R,Y,Z)$ as the \emph{current iterate}.
We let \textdef{$\Snr$} denote the matrices in $\Snp$ with rank at
most $r$.
Our new algorithm is an application of the \textdef{alternating direction
method of multipliers} \ADMMp, that uses the augmented Lagrangian
in \eqref{eq:augmlagr} and performs the following updates for
$(R_+,Y_+,Z_+)$:
\begin{subequations}\label{admm1}
\begin{align}
R_+=& \argmin_{R\in \Snr} \cL_A(R, Y, Z),\label{update-r}\\
Y_+=& \argmin_{Y\in \cP_i} \cL_A(R_+, Y, Z),\label{update-y}\\
Z_+=&\, Z+\gamma\cdot\beta(Y_+-\hat{V}R_+\hat{V}^\top)\label{update-z},
\end{align}
\end{subequations}
where the simplest case for the polyhedral constraints
$\cP_i$ is the linear manifold from the \textdef{gangster constraints}:
\[
	\textdef{$\cP_1=\{Y\in \Snn: \cG_J(Y)=E_{00}\}$}
\]
We use this notation as we add additional
simple polyhedral constraints. The second case is the polytope:
\[
\textdef{$\cP_2=\cP_1\cap \{0\leq Y\leq 1\}$}.
\]

Let $\hat{V}$ be normalized such that $\hat{V}^\top\hat{V}=I$. Then if $r=n$, the $R$-subproblem can be explicitly solved by
\begin{equation}
	\label{eq:Rproj}
\begin{array}{rcl}
	R_+&=&\argmin_{R\succeq 0}\langle Z, Y-\hat{V}R\hat{V}^\top\rangle + \frac{\beta}{2}\|Y-\hat{V}R\hat{V}^\top\|_F^2\\
    &=&\argmin_{R\succeq
0}\left\|Y-\hat{V}R\hat{V}^\top+\frac{1}{\beta}Z\right\|_F^2\\
    &=&\argmin_{R\succeq 0}\left\|R-\hat{V}^\top\big(Y+\frac{1}{\beta}Z\big)\hat{V}\right\|_F^2\\
 &=&\cP_{\SS_+}\left(\hat{V}^\top\big(Y+\frac{1}{\beta}Z\big)\hat{V}\right),
\end{array}
\end{equation}
where $\SS_+$ denotes the \SDP cone, and $\cP_{\SS_+}$ is the projection to $\SS_+$. For any symmetric matrix $W$, we have
$$\cP_{\SS_+}(W)=U_+\Sigma_+U_+^\top,$$
 where $(U_+,\Sigma_+)$ contains the positive eigenpairs of $W$ and $(U_-,\Sigma_-)$ the negative eigenpairs.

If $i=1$ in \eqref{update-y}, the $Y$-subproblem also has closed-form solution:
\begin{align}
Y_+=&\argmin_{\cG_J(Y)=E_{00}} \langle L_Q, Y\rangle+ \langle Z, Y-\hat{V}R_+\hat{V}^\top\rangle + \frac{\beta}{2}\|Y-\hat{V}R_+\hat{V}^\top\|_F^2\cr
=& \argmin_{\cG_J(Y)=E_{00}}\left\|Y-\hat{V}R_+\hat{V}^\top+\frac{L_Q+Z}{\beta}\right\|_F^2\cr
=& E_{00}+\cG_{J^c}\left(\hat{V}R_+\hat{V}^\top-\frac{L_Q+Z}{\beta}\right)\label{y-update1}
\end{align}

The advantage of using \ADMM is that its complexity only slightly
increases while we add more constraints to \eqref{qap1} to tighten the
\SDP relaxation. If $0\le \hat{V}R\hat{V}^\top \le 1$ is added in \eqref{qap1}, then we have constraint $0\le Y\le 1$ in \eqref{qap2} and reach to the problem
\begin{equation}\label{qap3}
	\textdef{$p_{RY}^*$}:=\min_{R, Y}\, \langle L_Q, Y\rangle, \st \cG_J(Y)=E_{00},\, 0\le Y\le 1,\, Y=\hat{V}R\hat{V}^\top,\, R\succeq 0.
\end{equation}
\index{optimal value \ADMM relaxation, $p^*_{RY}$}
The \ADMM for solving \eqref{qap3} has the same $R$-update and $Z$-update as those in \eqref{admm1}, and the $Y$-update is changed to
\begin{equation}\label{y-update2}
Y_+=E_{00}+\min\left(1,\,\max\left(0,\,\cG_{J^c}\big(\hat{V}R_+\hat{V}^\top-\frac{L_Q+Z}{\beta}\big)\right)\right).
\end{equation}
With nonnegativity constraint, the less-than-one constraint is redundant but makes the algorithm converge faster.

\subsection{Lower bound}
If we solve \eqref{qap1} or \eqref{qap2} exactly or to a very high accuracy, 
we get a lower bound of the original \QAPp. However, the problem size of 
\eqref{qap1} or \eqref{qap2} can be  extremely large, and thus having an 
exact or highly accurate solution may take extremely long time. In the 
following, we provide an inexpensive way to get a lower bound from the output of 
our algorithm that solves \eqref{qap2} to a moderate accuracy. Let 
$(R^{out},Y^{out},Z^{out})$ be the output of the \ADMM for \eqref{qap3}. 
\begin{lemma}
Let
\[
	\textdef{$\cR:=\{R\succeq 0\}$},\quad 
	\textdef{$\cY:=\{Y: \cG_J(Y)=E_{00},\,0\le Y\le 1\}$}, \quad
			 \textdef{$\cZ:=\{Z: \hat{V}^\top
			Z\hat{V}\preceq 0\}$}. 
\]
Define the \ADMM \textdef{dual function}
\[
	\textdef{$g(Z)$}:= \min_{Y\in\cY} \{\langle L_Q+Z, Y\rangle\}.
\]
Then the \textdef{dual problem of \ADMM \eqref{qap3}} is defined as
follows and satisfies weak duality.
\[
	\begin{array}{rcl}
		\textdef{$d^*_Z$}
		&:=& 
	\max\limits_{Z\in\cZ}  g(Z)
	\\&\leq &p_{R}^*. 
\end{array}
\]
\end{lemma}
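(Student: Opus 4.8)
The plan is to prove weak duality in the standard Lagrangian fashion: show that for every dual‑feasible $Z\in\cZ$ the number $g(Z)$ sits below the primal objective $\langle L_Q,Y\rangle$ at every primal‑feasible point, and then pass to the optimum. I would read $g$ as the partial Lagrangian dual of \eqref{qap3} obtained by dualizing only the coupling constraint $Y=\hat{V}R\hat{V}^\top$ with multiplier $Z$, while the constraints $R\succeq0$ and $Y\in\cY$ are kept inside the inner minimization. The set $\cZ$ should then appear exactly as the region on which the $R$‑part of the inner problem is bounded below, so the first task is to make that identification precise.

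First I would fix $Z\in\cZ$ together with any primal‑feasible $R$, i.e. $R\succeq0$ with $Y:=\hat{V}R\hat{V}^\top\in\cY$. By the definition of $g$ as a minimum over $\cY$,
\[
  g(Z)\;\le\;\langle L_Q+Z,\,Y\rangle\;=\;\langle L_Q,\,Y\rangle+\langle Z,\,\hat{V}R\hat{V}^\top\rangle .
\]
The crux is the next computation: rewrite $\langle Z,\hat{V}R\hat{V}^\top\rangle=\langle \hat{V}^\top Z\hat{V},\,R\rangle$ and use that $Z\in\cZ$ means $\hat{V}^\top Z\hat{V}\preceq0$ while $R\succeq0$; the trace inner product of a negative semidefinite and a positive semidefinite matrix is nonpositive, so $\langle \hat{V}^\top Z\hat{V},R\rangle\le0$ and hence $g(Z)\le\langle L_Q,Y\rangle$. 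Taking the infimum over all primal‑feasible $R$ and then the supremum over $Z\in\cZ$ yields the desired $d_Z^*\le$ (primal optimal value), which chained with the relaxation inequality already noted ($p^*\le p_X^*$) certifies that $g(Z)$ is a valid lower bound for \eqref{qap}.

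The hard part will be the characterization of $\cZ$ itself, equivalently the boundedness of the inner $R$‑minimization. Phrasing everything as a minimax, $d_Z^*\le\min_{Y\in\cY}\max_{Z\in\cZ}\langle L_Q+Z,Y\rangle$, the inner supremum $\max_{Z\in\cZ}\langle Z,Y\rangle$ is finite (indeed $0$) precisely when $Y=\hat{V}R\hat{V}^\top$ for some $R\succeq0$, and $+\infty$ otherwise; verifying this is the dual‑cone computation for $\{Z:\hat{V}^\top Z\hat{V}\preceq0\}$ and relies on $\hat{V}$ having full column rank, so that $Z\mapsto\hat{V}^\top Z\hat{V}$ is onto the symmetric matrices. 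I expect the only genuine care to be needed there, together with confirming that the comparison point $Y$ really lies in $\cY$ — that primal feasibility supplies the membership used in the very first inequality. One subtlety worth flagging: because $\cY$ carries the box $0\le Y\le1$, the comparison is against the feasible set of \eqref{qap3}, so the argument in fact bounds $d_Z^*$ by the optimal value $p_{RY}^*$ of \eqref{qap3}; dropping the box from $\cY$ (so that every feasible $R$ of \eqref{qap1} supplies an admissible $Y$) reproduces the identical chain and lands exactly on $p_R^*$.
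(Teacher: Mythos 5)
Your argument is correct and is essentially the paper's own: the paper likewise forms the partial Lagrangian of \eqref{qap3} by dualizing only the coupling constraint $Y=\hat{V}R\hat{V}^\top$, keeps $R\in\cR$ and $Y\in\cY$ inside the inner minimization, uses $\langle Z,\hat{V}R\hat{V}^\top\rangle=\langle \hat{V}^\top Z\hat{V},R\rangle$ to see that the minimization over $R\succeq 0$ is bounded (with value $0$) exactly when $Z\in\cZ$, and then asserts that weak duality ``follows in the usual way by exchanging the max and min.'' Your pointwise verification --- the trace inner product of the negative semidefinite matrix $\hat{V}^\top Z\hat{V}$ with the positive semidefinite $R$ is nonpositive --- simply makes that last assertion explicit.

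The subtlety you flag at the end is genuine, and you came down on the correct side of it. Because $\cY$ carries the box $0\le Y\le 1$, what your chain (and the paper's) proves is $d^*_Z\le p_{RY}^*$, the optimal value of \eqref{qap3}; it does not prove the printed inequality $d^*_Z\le p_R^*$. Since \eqref{qap3} is \eqref{qap1} with additional constraints, $p_R^*\le p_{RY}^*$, so ``$\le p_R^*$'' is a strictly stronger claim, and it can genuinely fail: whenever the nonnegativity cuts strictly improve the relaxation and strong duality holds for \eqref{qap3}, one has $d^*_Z=p_{RY}^*>p_R^*$ --- precisely the regime in which the paper reports improved bounds. So the statement's ``$\le p_R^*$'' should read ``$\le p_{RY}^*$''; the inequality the paper actually needs downstream, namely $g(Z)\le p_{RY}^*\le p_X^*$ for every $Z\in\cZ$ so that the projected dual value is a valid lower bound for \eqref{qap}, is exactly what you established. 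Your remark that dropping the box from $\cY$ would land the same chain on $p_R^*$ is also right, but note that this changes the dual function $g$ itself, so it is a different lemma rather than a repair of this one.
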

\begin{proof}
The dual problem of \eqref{qap3} can be derived as
\begin{align*}
d^*_Z:=&\max_{Z}\min_{R\in\cR,Y\in\cY} \langle L_Q, Y\rangle + \langle Z,  Y-\hat{V}R\hat{V}^\top\rangle\\
=&\max_{Z}\min_{Y\in\cY} \langle L_Q, Y\rangle 
	+ \langle Z,  Y\rangle
	+  \min_{R\in\cR}\langle Z,  -\hat{V}R\hat{V}^\top\rangle\\
=&\max_{Z}\min_{Y\in\cY} \langle L_Q, Y\rangle 
	+ \langle Z,  Y\rangle
	+  \min_{R\in\cR}\langle \hat{V}^\top Z\hat{V},  -R\rangle\\
=& \max_{Z\in\cZ} \min_{Y\in\cY} \langle L_Q+Z, Y\rangle,\\
=& \max_{Z\in\cZ} g(Z)
\end{align*}
Weak duality follows in the usual way by exchanging the max and min.
\end{proof}
For any $Z\in\cZ$, we have $g(Z)$ is a lower bound of \eqref{qap3} 
and thus of the original \QAPp. We use the dual function value of the projection
\textdef{$g\big(\cP_\cZ(Z^{out})\big)$}
as the lower bound, and next we show how to get $\cP_\cZ(\tilde{Z})$ for any symmetric matrix $\tilde{Z}$.

Let $\hat{V}_{\perp}$ be the orthonormal basis of the null space of $\hat{V}$. Then $\bar{V}=(\hat{V},\hat{V}_\perp)$ is an orthogonal matrix. Let $\bar{V}^\top Z \bar{V}=W=\left[\begin{array}{cc}W_{11}& W_{12}\\ W_{21} & W_{22}\end{array}\right]$, and we have
$$\hat{V}^\top Z\hat{V}\preceq 0\Leftrightarrow \hat{V}^\top Z\hat{V}=\hat{V}^\top \bar{V}W\bar{V}^\top\hat{V}=W_{11}\preceq 0.$$
Hence,
\begin{align*}
\cP_\cZ(\tilde{Z})= & \argmin_{Z\in\cZ}\|Z-\tilde{Z}\|_F^2\\
=& \argmin_{W_{11}\preceq 0}\|\bar{V}W\bar{V}^\top - \tilde{Z}\|_F^2\\
=& \argmin_{W_{11}\preceq 0}\|W-\bar{V}^\top\tilde{Z}\bar{V}\|_F^2\\
=&\left[\begin{array}{cc}\cP_{\SS_-}(\tilde{W}_{11}) & \tilde{W}_{12}\\
\tilde{W}_{21} & \tilde{W}_{22}\end{array}\right],
\end{align*}
where $\SS_-$ denotes the negative semidefinite cone, and we have assumed $\bar{V}^\top\tilde{Z}\bar{V}=\left[\begin{array}{cc}\tilde{W}_{11} & \tilde{W}_{12}\\
\tilde{W}_{21} & \tilde{W}_{22}\end{array}\right]$. Note that $\cP_{\SS_-}(W_{11})=-\cP_{\SS_+}(-W_{11})$.

\subsection{Feasible solution of \QAPp}
Let $(R^{out},Y^{out},Z^{out})$ be the output of the \ADMM for \eqref{qap3}. Assume the largest eigenvalue and the corresponding eigenvector of $Y$ are $\lambda$ and $v$. We let $X^{out}$ be the matrix reshaped from the second through the last elements of the first column of $\lambda v v^\top$. Then we solve the linear program
\begin{equation}\label{linprog}
\max_X \langle X^{out}, X\rangle, \st Xe = e,\, X^\top e = e,\, X\ge 0
\end{equation}
by simplex method that gives a basic optimal solution, i.e., a permutation matrix.

\subsection{Low-rank solution} 
Instead of finding a feasible solution through \eqref{linprog}, we can directly get one by restricting $R$ to a rank-one matrix, i.e., $\mbox{rank}(R)=1$ and $R\in\SS_+$. With this constraint, the $R$-update can be modified to
\begin{equation}
R_+=\cP_{\SS_+\cap\cR_1}\left(\hat{V}^\top\big(Y+\frac{Z}{\beta}\big)\hat{V}\right),
\end{equation}
where $\cR_1=\{R: \mbox{rank}(R)= 1\}$ denotes the set of rank-one matrices. For a symmetric matrix $W$ with largest eigenvalue $\lambda>0$ and corresponding eigenvector $w$, we have
$$\cP_{\SS_+\cap\cR_1}=\lambda ww^\top.$$

     

     \subsection{Different choices for $V,\widehat V$}
The matrix $\widehat V$ is essential in the steps of the algorithm, see
e.g.,~\eqref{eq:Rproj}. A sparse $\widehat V$ helps in the projection if one
is using a sparse eigenvalue code. We have compared several.
One is based on applying a QR algorithm
to the original simple $V$ from the definition of $\hat V$
in \eqref{eq:LVhat}. The other two are based on the approach in 
\cite{HaoWangPongWolk:14} and we present the most successful here.
The orthogonal $V$ we use is
\[
   V =
\begin{bmatrix}
\begin{bmatrix}
\begin{bmatrix}
I_{\left \lfloor \frac n2 \right \rfloor} \otimes
                                      \frac 1{\sqrt 2} \begin{bmatrix}
                                      1\cr -1
                                      \end{bmatrix}
\end{bmatrix}
\cr 0_{(n-2\left \lfloor \frac n2 \right \rfloor),
          \left \lfloor \frac n2 \right \rfloor}
\end{bmatrix}
\begin{bmatrix}
\begin{bmatrix}
I_{\left \lfloor \frac n4 \right \rfloor} \otimes
                                     \frac 12 \begin{bmatrix}
                         1 \cr 1 \cr -1 \cr -1
                                      \end{bmatrix}
\end{bmatrix}
\cr 0_{(n-4\left \lfloor \frac n4 \right \rfloor),
          \left \lfloor \frac n4 \right \rfloor}
\end{bmatrix}
\begin{bmatrix}
\ldots
\end{bmatrix}
\begin{bmatrix}
\widehat V
\end{bmatrix}
\end{bmatrix}_{n\times n-1}
\]
i.e.,~the block matrix consisting of $t$ blocks formed from Kronecker products
along with one block $\widehat V$ to complete the appropriate size so that 
$V^\top V=I_{n-1}$, $V^\top e=0$.
We take advantage of the $0$, $1$ structure of the Kronecker blocks and
delay the scaling for the normalization till the end. 
The main work in the low rank projection part of the algorithm is to evaluate 
one (or a few) eigenvalues of $W=\widehat V^\top( Y+\frac{1}{\beta}Z)\hat{V}$
to obtain the update $R_+$.
\[
 Y+\frac{1}{\beta}Z
	= \begin{bmatrix}
		\rho & w^\top \cr w & \bar W
	\end{bmatrix}.
\]
We let 
\[
	K:=V\otimes V,\quad \alpha=1/\sqrt 2,\quad v=\frac 1{\sqrt 2
	n}e,\quad x=\begin{pmatrix} x_1 \cr \bar x \end{pmatrix}.
\]
The structure for $\widehat V$ in
\eqref{eq:LVhat} means that we can  evaluate the product for $Wx$ as
\[
	\begin{array}{rcl}
\begin{bmatrix}\alpha & 0\\ v & K\end{bmatrix}^\top
	 \begin{bmatrix}
		\rho & w^\top \cr w & \bar W
	\end{bmatrix}
\begin{bmatrix}\alpha & 0\\ v & K\end{bmatrix}x
		&=&
\begin{bmatrix}\alpha & 0\\ v & K\end{bmatrix}^\top
	 \begin{bmatrix}
		\rho & w^\top \cr w & \bar W
	\end{bmatrix}
	 \begin{pmatrix}
		\alpha x_1  \cr x_1v + K\bar x
	\end{pmatrix}
		\\&=&
\begin{bmatrix}\alpha & v^\top \\ 0 & K^\top\end{bmatrix}
	 \begin{pmatrix}
		 \rho	\alpha x_1  +w^\top (x_1v + K\bar x)\cr
		 \alpha x_1w  +\bar W(x_1v + K\bar x)
	\end{pmatrix}
		\\&=&
	 \begin{pmatrix}
		 \rho	\alpha^2 x_1  +\alpha w^\top (x_1v + K\bar x)+
		 v^\top\left(\alpha x_1w  +\bar W(x_1v + K\bar x)\right)
		 \cr
		 K^\top \left( \alpha x_1w  +\bar W(x_1v + K\bar
		 x)\right)
	\end{pmatrix}
		\\&=&
	 \begin{pmatrix}
		 \rho\alpha^2 x_1  
		 +\left(\alpha w^\top +v^\top\bar W\right)        
		 \left(x_1v + K\bar x\right)+
		 v^\top\left(\alpha x_1w  \right)
		 \cr
		 K^\top \left( \alpha x_1w  
			 +\bar W(x_1v + K\bar x)    \right)
	\end{pmatrix}.
\end{array}
\]
We emphasize that $V\otimes V= (\bar V\otimes \bar V)/(D\otimes
D)$, where $\bar V$ denotes the unscaled $V$, $D$ is the diagonal
matrix of scale factors to obtain the orthogonality in $V$, and
$/$ denotes the MATLAB division on the right, multiplication by
the inverse on the right. Therefore, we can evaluate
\[
	K^\top \bar W K= 
	(V\otimes V)^\top 
	\bar W(V\otimes V )
	= (\bar V\otimes \bar V)^\top \left[
	(D\otimes D)\backslash \bar W /(D\otimes D)
\right] (\bar V\otimes \bar V).
\]

\section{Numerical experiments} We illustrate our results in Table
\ref{table:probsall45tol5} on the forty five \QAP
instances I and II, see~\cite{BurKarRen91,MR1457185,rendlsotirov:06}. 
The optimal solutions are in column $1$ and current
best known lower bounds from \cite{rendlsotirov:06} are in column $3$ 
marked \emph{bundle}. The p-d i-p lower bound is given in the
column marked \emph{HKM-FR}. (The code failed to find a lower bound on
several problems marked $-1111$.) These bounds were obtained using the facially
reduced \SDP relaxation and exploiting the low rank (one and two) of the
constraints. We used SDPT3 \cite{MR1778429}.\footnote{We do not include
	the times as they were much greater than for the ADMM approach,
e.g.,~hours instead of minutes and a day instead of an hour.}

Our \ADMM lower bound follows in column $4$. We see that it is at least as
good as the current best known bounds in every instance. The percent
improvement is given in column $7$.
We then present the best upper bounds from our heuristics in column $5$.
This allows us to calculate the percentage gap in column $6$.
The CPU seconds are then given in the last columns $8-9$ for the high
and low rank approaches, respectively. The last two columns are the
ratios of CPU times. Column $10$ is the ratio of CPU times for the $5$
decimal and $12$ decimal tolerance for the high rank approach.  All the
ratios for the low rank approach are approximately $1$ and not included.
The quality of the bounds did not change for these two
tolerances. However, we consider it of interest to show that the higher
tolerance can be obtained.

The last column $11$ is the ratio of CPU times for the $12$ decimal 
tolerance of the
high rank approach in column $8$ with the CPU times for $9$ decimal
tolerance for the HKM approach. We emphasize that the lower bounds for
the HKM approach were significantly weaker.

We used MATLAB version 8.6.0.267246 (R2015b) on a PC Dell Optiplex 9020
64-bit, with $16$ Gig, running Windows 7.

We heuristically set $\gamma=1.618$ and $\beta=\frac{n}{3}$ in \ADMMp.
We used two different tolerances $1e-12,1e-5$. Solving the \SDP to the
higher accuracy did not improve the bounds. However, it is interesting
that the \ADMM approach was able to solve the \SDP relaxations to such
high accuracy, something the p-d i-p approach has great difficulty with.
We provide the CPU times for both accuracies.
Our times are significantly lower than those reported in
\cite{rendlsotirov:06,MR2166543}, e.g.,~from $10$ hours to less than an
hour.

We emphasize that we have \underline{improved bounds} for all the \SDP 
instances and have provably found exact solutions six of the instances
Had12,14,16,18, Rou12, Tai12a.
This is due to the ability to add all the nonnegativity
constraints and rounding numbers to $0,1$ with essentially zero extra
computational cost. In addition, the rounding appears to improve the
upper bounds as well. This was the case for both using tolerance of $12$
or only $5$ decimals in the \ADMM algorithm.

	\begin{table}[H]
	\begin{center}
	\begin{scriptsize}
	\resizebox{\textwidth}{!}{\begin{tabular}{|c|ccccccccccc|}
\hline
& 1. & 2. & 3. & 4. & 5. & 6. & 7. ADMM & 8 Tol5 & 9 Tol5 & 10 Tol12/5 & 11 HKM \\
& opt & Bundle \cite{rendlsotirov:06} & HKM-FR & ADMM & feas & ADMM & vs Bundle & cpusec & cpusec & cpuratio & cpuratio \\
& value & LowBnd & LowBnd & LowBnd & UpBnd & \%gap & \%Impr LowBnd & HighRk & LowRk & HighRk & Tol 9 \\
\hline
\hline
Esc16a & 68  &  59  &  50  &  64 &  72  &   11.76  &    7.35  &  2.30e+01  &  4.02 &  4.14  &  9.37 \\
Esc16b & 292  &  288  &  276  &  290 &  300  &    3.42  &    0.68  &  3.87e+00  &  4.55 &  2.15  &  8.08 \\
Esc16c & 160  &  142  &  132  &  154 &  188  &   21.25  &    7.50  &  1.09e+01  &  8.09 &  4.53  &  4.88 \\
Esc16d & 16  &  8  &  -12  &  13 &  18  &   31.25  &   31.25  &  2.14e+01  &  3.69 &  4.87  & 10.22 \\
Esc16e & 28  &  23  &  13  &  27 &  32  &   17.86  &   14.29  &  3.02e+01  &  4.29 &  4.80  &  8.79 \\
Esc16g & 26  &  20  &  11  &  25 &  28  &   11.54  &   19.23  &  4.24e+01  &  4.27 &  2.72  &  8.63 \\
Esc16h & 996  &  970  &  909  &  977 &  996  &    1.91  &    0.70  &  4.91e+00  &  3.53 &  2.33  & 10.60 \\
Esc16i & 14  &  9  &  -21  &  12 &  14  &   14.29  &   21.43  &  1.37e+02  &  4.30 &  2.39  &  8.76 \\
Esc16j & 8  &  7  &  -4  &  8 &  14  &   75.00  &   12.50  &  8.95e+01  &  4.80 &  3.83  &  7.93 \\
Had12  & 1652  &  1643  &  1641  &  1652 &  1652  &    0.00  &    0.54  &  1.02e+01  &  1.08 &  1.06  &  5.91 \\
Had14  & 2724  &  2715  &  2709  &  2724 &  2724  &    0.00  &    0.33  &  3.23e+01  &  1.69 &  1.19  & 10.46 \\
Had16  & 3720  &  3699  &  3678  &  3720 &  3720  &    0.00  &    0.56  &  1.75e+02  &  3.15 &  1.04  & 12.51 \\
Had18  & 5358  &  5317  &  5287  &  5358 &  5358  &    0.00  &    0.77  &  4.49e+02  &  6.00 &  2.22  & 13.28 \\
Had20  & 6922  &  6885  &  6848  &  6922 &  6930  &    0.12  &    0.53  &  3.85e+02  & 12.15 &  4.20  & 14.53 \\
Kra30a & 149936  &  136059  &  -1111  &  143576 &  169708  &   17.43  &    5.01  &  5.88e+03  & 149.32 &  2.22  & 1111.11 \\
Kra30b & 91420  &  81156  &  -1111  &  87858 &  105740  &   19.56  &    7.33  &  4.36e+03  & 170.57 &  3.01  & 1111.11 \\
Kra32  & 88700  &  79659  &  -1111  &  85775 &  103790  &   20.31  &    6.90  &  3.57e+03  & 200.26 &  4.28  & 1111.11 \\
Nug12  & 578  &  557  &  530  &  568 &  632  &   11.07  &    1.90  &  2.60e+01  &  1.04 &  6.61  &  5.93 \\
Nug14  & 1014  &  992  &  960  &  1011 &  1022  &    1.08  &    1.87  &  7.15e+01  &  1.87 &  5.06  &  8.43 \\
Nug15  & 1150  &  1122  &  1071  &  1141 &  1306  &   14.35  &    1.65  &  9.10e+01  &  3.31 &  5.90  &  7.79 \\
Nug16a & 1610  &  1570  &  1528  &  1600 &  1610  &    0.62  &    1.86  &  1.81e+02  &  3.06 &  3.28  & 12.24 \\
Nug16b & 1240  &  1188  &  1139  &  1219 &  1356  &   11.05  &    2.50  &  9.35e+01  &  3.19 &  6.23  & 11.83 \\
Nug17  & 1732  &  1669  &  1622  &  1708 &  1756  &    2.77  &    2.25  &  2.31e+02  &  4.34 &  3.63  & 13.13 \\
Nug18  & 1930  &  1852  &  1802  &  1894 &  2160  &   13.78  &    2.18  &  4.16e+02  &  5.47 &  2.43  & 15.23 \\
Nug20  & 2570  &  2451  &  2386  &  2507 &  2784  &   10.78  &    2.18  &  4.76e+02  & 11.56 &  3.75  & 14.35 \\
Nug21  & 2438  &  2323  &  2386  &  2382 &  2706  &   13.29  &    2.42  &  1.41e+03  & 15.32 &  1.68  & 14.95 \\
Nug22  & 3596  &  3440  &  3396  &  3529 &  3940  &   11.43  &    2.47  &  2.07e+03  & 21.82 &  1.39  & 13.90 \\
Nug24  & 3488  &  3310  &  -1111  &  3402 &  3794  &   11.24  &    2.64  &  1.20e+03  & 29.64 &  3.29  & 1111.11 \\
Nug25  & 3744  &  3535  &  -1111  &  3626 &  4060  &   11.59  &    2.43  &  3.12e+03  & 39.23 &  1.65  & 1111.11 \\
Nug27  & 5234  &  4965  &  -1111  &  5130 &  5822  &   13.22  &    3.15  &  5.11e+03  & 78.18 &  1.58  & 1111.11 \\
Nug28  & 5166  &  4901  &  -1111  &  5026 &  5730  &   13.63  &    2.42  &  4.11e+03  & 83.38 &  2.17  & 1111.11 \\
Nug30  & 6124  &  5803  &  -1111  &  5950 &  6676  &   11.85  &    2.40  &  7.36e+03  & 133.38 &  1.76  & 1111.11 \\
Rou12  & 235528  &  223680  &  221161  &  235528 &  235528  &    0.00  &    5.03  &  2.76e+01  &  0.93 &  0.98  &  6.90 \\
Rou15  & 354210  &  333287  &  323235  &  350217 &  367782  &    4.96  &    4.78  &  3.12e+01  &  2.70 &  8.68  &  9.46 \\
Rou20  & 725522  &  663833  &  642856  &  695181 &  765390  &    9.68  &    4.32  &  1.67e+02  & 10.31 & 10.90  & 16.08 \\
Scr12 & 31410  &  29321  &  23973  &  31410 &  38806  &   23.55  &    6.65  &  4.40e+00  &  1.17 &  2.40  &  5.79 \\
Scr15  & 51140  &  48836  &  42204  &  51140 &  58304  &   14.01  &    4.51  &  1.38e+01  &  2.41 &  1.84  & 10.75 \\
Scr20  & 110030  &  94998  &  83302  &  106803 &  138474  &   28.78  &   10.73  &  1.53e+03  &  9.61 &  1.15  & 17.96 \\
Tai12a & 224416  &  222784  &  215637  &  224416 &  224416  &    0.00  &    0.73  &  1.79e+00  &  0.90 &  1.04  &  6.70 \\
Tai15a & 388214  &  364761  &  349586  &  377101 &  412760  &    9.19  &    3.18  &  2.74e+01  &  2.35 & 14.69  & 10.34 \\
Tai17a & 491812  &  451317  &  441294  &  476525 &  546366  &   14.20  &    5.13  &  6.50e+01  &  4.52 &  7.31  & 12.04 \\
Tai20a & 703482  &  637300  &  619092  &  671675 &  750450  &   11.20  &    4.89  &  1.28e+02  & 10.10 & 14.32  & 15.85 \\
Tai25a & 1167256  &  1041337  &  1096657  &  1096657 &  1271696  &   15.00  &    4.74  &  3.09e+02  & 38.48 &  5.58  & 1111.11 \\
Tai30a & 1818146  &  1652186  &  -1111  &  1706871 &  1942086  &   12.94  &    3.01  &  1.25e+03  & 142.55 & 10.51  & 1111.11 \\
Tho30  & 88900  &  77647  &  -1111  &  86838 &  102760  &   17.91  &   10.34  &  2.83e+03  & 164.86 &  4.74  & 1111.11 \\
\hline
\end{tabular}
}
	\end{scriptsize}
	\end{center}
	\caption{
		\href{http://anjos.mgi.polymtl.ca/qaplib/}
		\QAP Instances I and II. Requested tolerance $1e-5$. 
	}
	\label{table:probsall45tol5}
	\end{table}

\section{Concluding Remarks}
In this paper we have shown the efficiency of using the \ADMM approach
in solving the \SDP relaxation of the \QAP problem. In particular, we
have shown that we can obtain high accuracy solutions of the \SDP
relaxation in less significantly less cost than current
approaches. In addition, the \SDP relaxation includes the nonnegativity
constraints at essentially no extra cost. This results in both a fast
solution and improved lower  and upper bounds for the \QAPp.

In a forthcoming study we propose to include this in a branch and bound
framework and implement it in a parallel programming approach, see
e.g.,~\cite{MR2932722}.
In addition, we propose to test the possibility of using \emph{warm
starts} in the branching/bounding process and test it on the larger test
sets such as used in e.g.,~\cite{MR2546331}.

\addcontentsline{toc}{section}{Index}
\label{ind:index}
\printindex

\bibliographystyle{plain}

\def\cprime{$'$} \def\cprime{$'$} \def\cprime{$'$}
  \def\udot#1{\ifmmode\oalign{$#1$\crcr\hidewidth.\hidewidth
  }\else\oalign{#1\crcr\hidewidth.\hidewidth}\fi} \def\cprime{$'$}
  \def\cprime{$'$} \def\cprime{$'$}

\end{document}